\def\BibTeX{{\rm B\kern-.05em{\sc i\kern-.025em b}\kern-.08em T\kern-.1667em\lower.7ex\hbox{E}\kern-.125emX}}
\newtheorem{proposition}{Proposition}[section]
\begin{document}

\title{\LARGE \bf Path Planning for Cooperative Routing of Air-Ground Vehicles}

\author{Satyanarayana G. Manyam$^{1}$, David W. Casbeer$^{2}$, and Kaarthik Sundar$^{3}$%
\thanks{$^{1}$National Research Council Fellow, Air Force Research Laboratory, Dayton-OH, 45433,  \texttt{msngupta@gmail.com}, }%
 \thanks{$^{2}$Research Scientist, Control Science Center of Excellence, Air Force Research Laboratory, WPAFB, OH, 45433, \texttt{david.casbeer@us.af.mil},}%
 \thanks{$^{3}$Graduate Student, Dept. of Mechanical Engineering, Texas A\&M University, College Station, TX 77843.}
}

\maketitle
\thispagestyle{empty}
\pagestyle{empty}

\begin{abstract}
We consider a cooperative vehicle routing problem for surveillance and reconnaissance missions with communication constraints between the vehicles. We propose a framework which involves a ground vehicle and an aerial vehicle; the vehicles travel cooperatively satisfying the communication limits, and visit a set of targets. We present a mixed integer linear programming (MILP) formulation and develop a branch-and-cut algorithm to solve the path planning problem for the ground and air vehicles. The effectiveness of the proposed approach is corroborated through extensive computational experiments on several randomly generated instances.
\end{abstract}

\section{Introduction} \label{sec:intro}

In this paper, we present a path planning problem involving an Unmanned Aerial Vehicle (UAV) and a ground vehicle for intelligence, surveillance and reconnaissance (ISR) missions. UAVs are being routinely used in military applications such as border patrol, reconnaissance, and surveillance expeditions, and civilian applications \cite{Frew2009, Curry2004, ZajkowskiT2006}. They are prime candidates for ISR missions due to their several advantages such as portability and low risk, to name a few. A typical ISR mission would require the UAVs to collect images, videos, or sensor data and transmit them to a ground/base station. The data collected in these ISR missions are most often very time sensitive and ideally, the data would be useful only if it is processed in real-time or near real-time. We propose a framework to meet this objective, and solve the underlying problem of planning paths for the UAV and the ground vehicle.

Cooperative control and path planning for a team of vehicles (UAVs or UAVs together with ground vehicles) has been a problem of interest and has received wide attention during the past decade (see \cite{rasmussenbook, kumarram, tsourdosbook, gilsparks, lasfarg, smithhdp, sujithicuas}). The problem of communication-constrained vehicle routing for a team of vehicles has been addressed in \cite{mostofidynamic, hsiehcowley, basuredi, Sabo:2014aa}. More specifically, authors in \cite{mostofidynamic} study the problem of deploying a team of mobile agents to periodically monitor several points of interest. Authors in \cite{hsiehcowley} perform an experimental study of the strategies to maintain end-to-end communication links for a team of robots in reconnaissance missions.  Heuristic approaches for a variation of the traditional vehicle routing problems with a simple communication model has been studied in \cite{Sabo:2014aa}. In \cite{sujithicuas}, the authors considered a forest mapping application involving an UAV and a ground vehicle; they develop algorithms to plan  routes for the fuel constrained UAV and the ground vehicle, which can act as refueling station for the UAV during the mission. This is the first work in the literature that proposes an approach using one UAV and one ground vehicle.

In this article, we propose a reconnaissance and data collection methodology using a UAV and a ground vehicle. The two vehicles are subject to communication constraints and they work cooperatively to accomplish the mission. The choice of using a ground vehicle and a UAV for ISR missions is motivated by the lack of roads and the presence of geographical obstacles such as rivers, lakes, mountains, etc. that must be circumvented to reach target locations. In the scenarios where ground vehicle may not be able to reach certain targets, a UAV could be used to gather information.  However, due to size, weight, and power restrictions the UAV would be unable to transmit the data from remote targets to the ground station. Furthermore, in ideal situations, the ground vehicle would have a communication link to the ground station, perhaps through satellite or by other means. Hence, in this paper, the UAV is required to stay connected to the ground vehicle, under the thought that data can be instantaneously transmitted to the ground station. We also do not impose any specific constraint on communication for the ground vehicle, and thus in cases where communication is degraded and the ground vehicle cannot maintain contact with the ground station, the cooperative routing problem here will still yield a viable solution.  In this case, one could think of this problem as a cooperative routing problem to minimize the effort taken by the vehicles to acquire the data and then ferry it back to the ground station. The problem is formally defined in the following section.

\begin{figure}[htpb]
\centering
\includegraphics[height=2in]{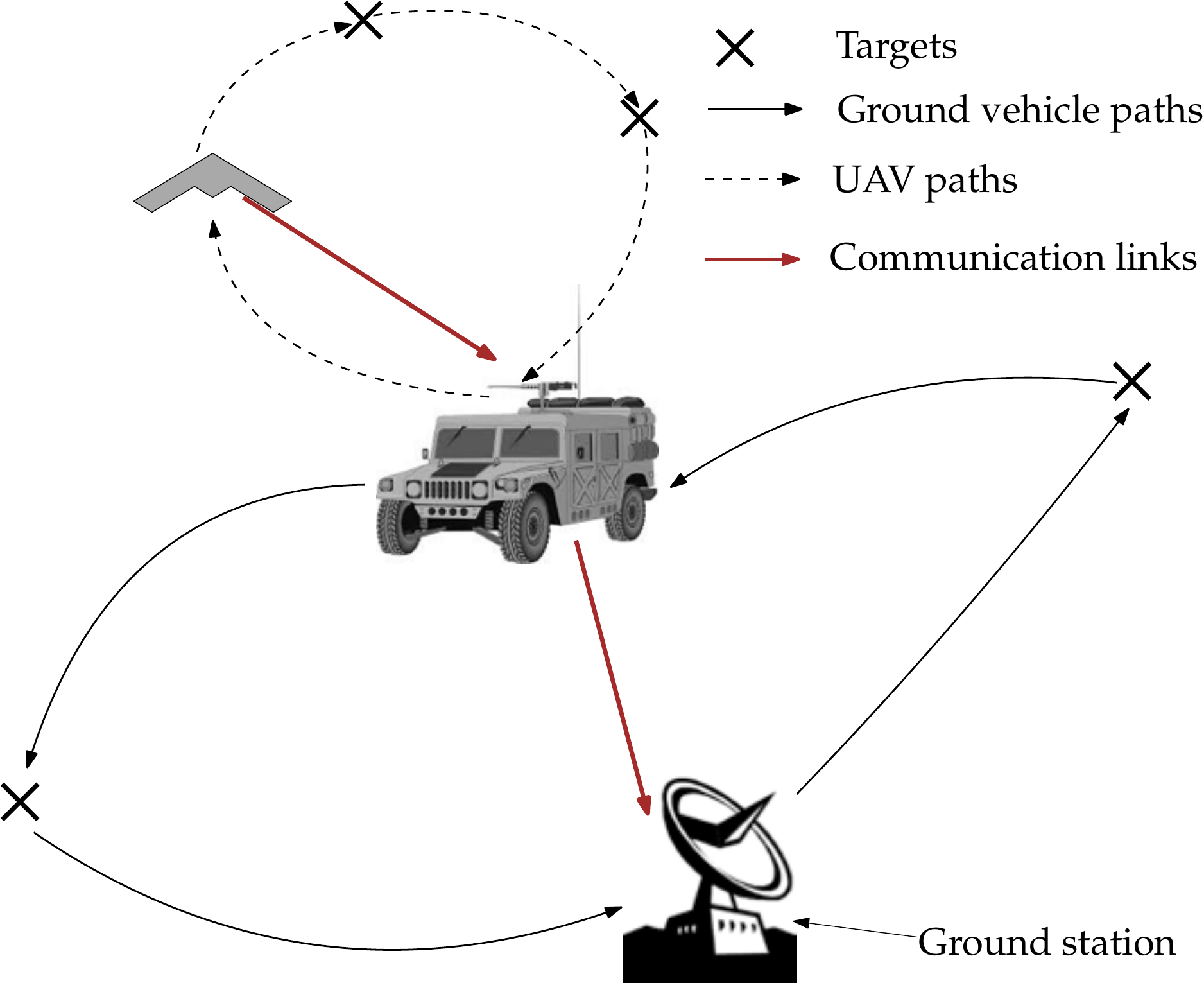}
\caption{Air vehicle - Ground vehicles paths}
\label{fig:agvroute}
\end{figure}

\subsection{Path planning problem}
The path planning problem considered in this article is formally defined as follows: we are given the locations of a set of targets and a ground station where the ground vehicle is initially stationed. The UAV is being carried by the ground vehicle. For each vehicle, we are given a travel cost between every pair of targets. This cost could be the Euclidean distance between the pair of targets or could also depend on the terrain.  Given this, the objective of the problem is to determine paths of minimum cost for the ground vehicle and the UAV such that following conditions are satisfied:
\begin{itemize}
\item every target is visited either by the ground vehicle or by the UAV, and
\item the UAV can always establish a reliable communication link with the ground vehicle.
\end{itemize}
The sequence of actions for a feasible mission is as follows. The ground vehicle starts at the ground station and visits the first target in its route, then the UAV is deployed from the first target and it collects the data from a subset of targets according to its plan and returns to the ground vehicle. Then the ground vehicle proceeds to the next stop. This process is repeated until the data is collected from all of the targets.  An illustration of a typical route is shown in Fig. \ref{fig:agvroute}. Once all the targets are visited, the ground vehicle carrying the UAV returns to the ground station. More specifically, the objective of determining the routes for the vehicles involves (i) identifying the stops and order to visit them for the ground vehicle and (ii) at each stop, identifying the subset of targets and order in which the UAV has to visit them. Furthermore, to ensure that the UAV can always establish a communication link with the ground vehicle, we enforce the constraint that the UAV has to stay within a distance of $R$ units from the ground vehicle at all times during the mission. We call this problem the \emph{cooperative air-ground vehicle routing problem} (CAGVRP). A feasible solution to the problem is shown in Fig. \ref{fig:sampletour}.

\begin{figure}[htpb]
\centering{}
\includegraphics[width=2.5in]{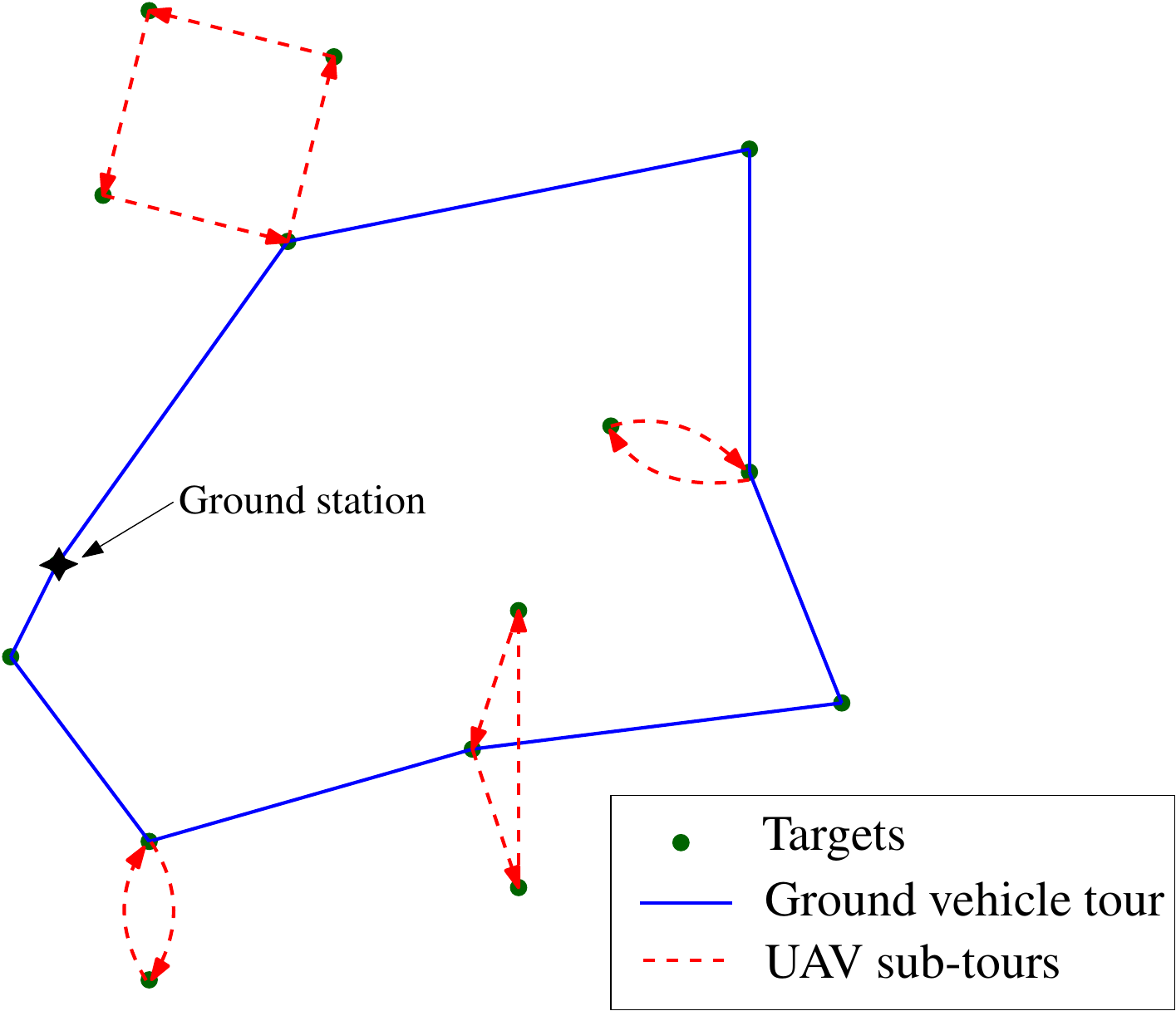}
\caption{A sample tour of the CAGVRP}
\label{fig:sampletour}
\end{figure}

\subsection {Related Literature}
The CAGVRP is NP-hard because it is a generalization of the traveling salesman problem. Authors in \cite{smithhdp} propose a framework similar to the CAGVRP involving a truck, traveling on a road network and a quadrotor for a package delivery problem in urban environments. This problem differs from the CAGVRP in two aspects: (i) the ground vehicle or the truck is restricted to travel along a road network and (ii) the quadrotor has to return to the ground vehicle after each delivery. The authors present a transformation algorithm to transform the problem to a generalized traveling salesman problem, whereas the focus of this paper is to develop algorithms to obtain an optimal solution to the CAGVRP. CAGVRP also resembles the two echelon vehicle routing problem \cite{perbolitwoech}; the key difference is in CAGVRP the locations at which the UAV tours originate are not known, which makes it even harder to solve.

Other problems addressed in the literature that are similar to the CAGVRP are the ring-star problem \cite{laportersp, kaarthikmrsp, baldaccimrsp, baldaccicaprsp} and the hierarchical ring-network problem \cite{golddam, altinkcor, carrollinoc, klincreview, shiself, stidsenringnet}. The ring-star problem aims to find a minimum cost cycle (or ring) through a subset of targets, and the targets that do not lie on the cycle should be assigned to one of the targets in the cycle. Algorithms to obtain an optimal solution based on the branch-and-cut paradigm are presented for the ring-star problem in \cite{laportersp}, for its multiple-depot variant in \cite{kaarthikmrsp}, and for the capacitated variants in \cite{baldaccimrsp, baldaccicaprsp}. In CAGVRP, not only must the targets that does not lie on the main ring be assigned to the targets on the ring, but there must be sub-tours chosen for all the off-ring targets. 

Another closely related problem is the hierarchical ring-network problem (HRNP) which aims to find a hierarchical two-layer ring network. The top layer consists of a federal ring (analogous to the ground vehicle tour) which establishes connection between a number of node-disjoint metro rings (analogous to the UAV sub-tours) in the bottom layer. Authors in \cite{altinkcor} present heuristics and an approximation algorithm to solve the HRNP; they assume that the number of metro rings and the nodes at which the metro rings are attached to the federal ring are given. In \cite{shiself}, heuristics and enumeration methods are given to solve the HRNP, where a certain demand must be satisfied between every pair of targets. Heuristics to solve a variant of the HRNP are presented in \cite{golddam}. 

Another variant of HRNP, where the bottom layer could be rings or ring-stars is solved in \cite{carrollinoc}. The authors assume the number of possible local rings are given. The authors in \cite{stidsenringnet} use branch-and-price algorithm to solve the HRN problem with demands to be satisfied between every pair of targets. {Their solution procedure involved two steps: in the first step, they solve a modified HRNP which ignores the cost of the federal ring and design the metro rings; in the second step, they solve a generalized traveling salesman problem on the metro rings.} In all the aforementioned papers, the cost of the federal ring is either ignored or assumed to be equal to that of the metro ring. The CAGVRP differs from the HRNP in the following aspects: (i) the travel costs for the ground vehicle and the UAV are different, (ii) we have an additional distance constraint \emph{i.e.,} the distance between the UAV and the ground vehicle has to be within $R$ units throughout the mission, and (iii) we attempt to solve the coupled problem of finding paths for the ground vehicle and the UAV to minimizes the total travel cost.

This article is organized as follows: We present the problem formulated as mixed-integer linear program (MILP) in Section \ref{sec:prbfrm}. In Section \ref{sec:bandc}, we explain the branch-and-cut framework used to solve the MILP formulation. Computational results are reported in Section \ref{sec:compres} and conclusions are made in Section \ref{sec:conc}.

\section{Problem Formulation} \label{sec:prbfrm}

This section presents a mixed integer linear programming formulation for the CAGVRP. Let $T$ denote the set of targets $\{t_0,\dots,t_n\}$; $t_0$ is the ground station. The CAGVRP is defined on a mixed graph $G=(T, E\cup A)$, where $E$ and $A$ are a set of undirected edges and directed arcs, respectively, joining any two targets in $T$. Each edge $e = (i,j) \in E$ is associated with a non-negative cost $c_e$ required for the ground vehicle to traverse the edge $e$ (if $e$ connects the vertices $i$ and $j$, then $(i,j)$ and $e$ will be used interchangeably to denote the same edge in $E$). Similarly, each arc $[i,j] \in A$ is associated with a non-negative cost $d_{ij}$ required for the UAV to travel from target $i$ to target $j$. We also associate with each arc $[i,j]\in A$, an auxiliary cost $f_{ij}$ to enforce the mission constraint that the UAV should always be within a distance of $R$ units from the ground vehicle. For any $i,j \in T$, $f_{ij}$ is set to $0$ if the Euclidean distance between targets $i$ and $j$ is less than $R$ units, and is set to an arbitrarily large value otherwise. 

We associate vectors $\mathbf x \in \mathbb{R}^{|E|}$, $\mathbf w \in \mathbb{R}^{|A|}$ and $\mathbf y \in \mathbb{R}^{|A|}$ with each feasible solution $\mathcal F$. The component $x_e$ of $\mathbf x$, associated with edge $e \in E$, is a binary variable which takes a value $1$ if the edge $e$ is traversed by the ground vehicle, and $0$ otherwise. Each component $w_{ij}$ of $\mathbf w$, associated with the directed arc $[i,j] \in A$  is a binary variable; it denotes if the arc is present in a UAV sub-tour. Similarly, $y_{ij}$, a component of $\mathbf y$ is a binary assignment variable that takes a value $1$ if the target $i$ is assigned to the UAV sub-tour that originates from the target $j$, and $0$ otherwise. Note that if a target $i$ is visited by a ground vehicle then the assignment variable $y_{ii}$ is equal to $1$ (the target is assigned to itself). 

For any $S\subset T$, we define $\gamma(S) = \{(i,j) \in E: i,j \in S\}$, $\delta(S) = \{(i,j)\in E: i \in S, j\notin S\}$, $\delta^+(S) = \{[i,j]\in A: i\in S, \,j\notin S\}$, and $\delta^-(S) = \{[j,i]\in A: i\in S, j\notin S\}$. If $S = \{i\}$, we simply write $\delta(i)$, $\delta^+(i)$ and $\delta^-(i)$ instead of $\delta(\{i\})$, $\delta^+(\{i\})$ and $\delta^-(\{i\})$ respectively. A mixed-integer program formulated using the above variables for the CAGVRP is as follows:
\begin{flalign}
&(\mathcal{F}_1) \quad\mbox{Minimize}  \sum_{e \in E} c_e x_e + \sum_{(i,j) \in A} d_{ij} w_{ij}  + \sum_{(i,j) \in A} f_{ij} y_{ij} \nonumber \\ 
&\mbox{Subject to} \quad \quad \nonumber \\
&\sum_{e \in \delta(i)} x_e  = 2y_{ii} \qquad\forall i \in T, \label{eq:degx}\\
&\sum_{j \in T} w_{ij} =1, \qquad \forall i \in T, \label{eq:outdegw}\\
&\sum_{i \in T} w_{ij} =1, \qquad \forall j \in T, \label{eq:indegw}\\
&w_{ij}  \le \sum_{k \in T} y_{ik} y_{jk}, \qquad \forall [i,j] \in A, \label{eq:wij}\\
&\sum_{e \in \delta (S)}x_e \ge 2 \sum_{j \in S}y_{ij}, \qquad \forall i \in S, \, t_0 \notin S, \, S \subset T, \label{eq:xsec}\\
&\sum_{[i,j] \in \delta^+(S)} w_{ij} \ge 1-\sum_{j \in S} y_{ij}, \qquad \forall i \in S, \, S \subset T, \label{eq:wsec1}\\
&\sum_{[i,j] \in \delta^-(S)} w_{ij} \ge 1-\sum_{j \in S} y_{ij}, \qquad \forall i \in S, \, S \subset T, \label{eq:wsec2} \\
&x_e \in \{0,1\}, \qquad \forall e \in E, \text{ and } \label{eq:xint} \\
&y_{ij}, w_{ij} \in \{0,1\} \qquad \forall [i,j] \in A. \label{eq:ywint} 
\end{flalign}
\begin{figure}
\centering{}
\includegraphics[width=2in]{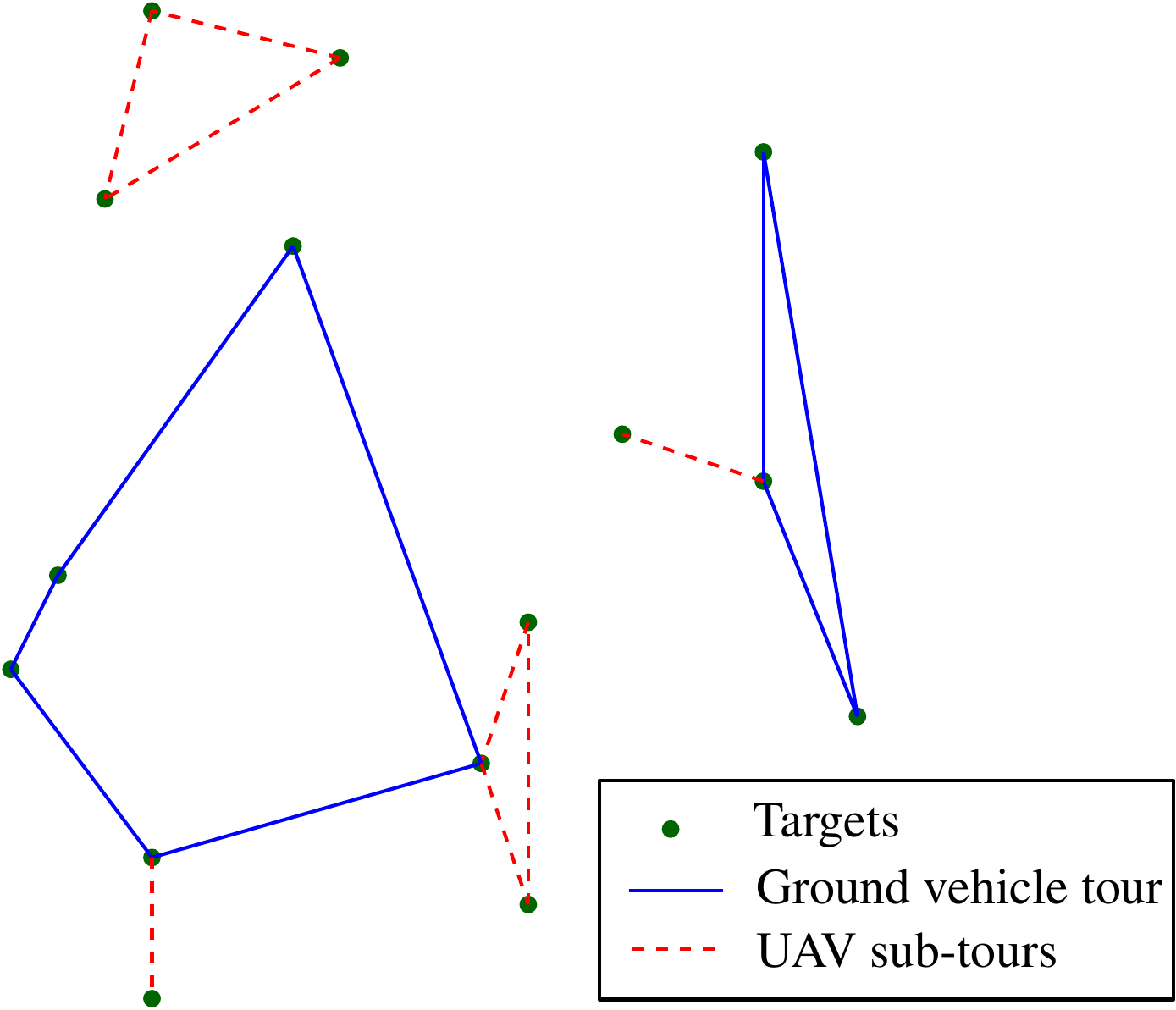}
\caption{Infeasible solution to the CAGVRP because the ground vehicle tour consists of two disjoint sub-tours and there exists a separated UAV sub-tour.}
\label{fig:infeas}
\end{figure}

The objective of the problem is to minimize the total cost of travel for the ground vehicle and the UAV. In the above formulation, the constraints in \eqref{eq:degx} ensure that the number of undirected ground vehicle edges incident on any target $i\in T$ is equal to $2$ if and only if the target $i$ is assigned to itself ($y_{ii} = 1$). The UAV sub-tours connectivity constraints are enforced using the equations \eqref{eq:outdegw} and \eqref{eq:indegw}. For each target, these constraints ensure that there are two UAV sub-tour edges incident on the target, one incoming and one outgoing edge. This is in contrast to the ground vehicle routes. The UAV has to visit every target $i\in T$ irrespective of the value of $y_{ii}$. When the value of the $y_{ii}=1$, the constraints in \eqref{eq:outdegw} and \eqref{eq:indegw} would result in a trivial UAV sub-tour \emph{i.e.,} $w_{ii} = 1$. The constraints in Eq. \eqref{eq:wij} ensure that a sub-tour edge of the UAV can exist between two targets only if those two targets are assigned to the same target. One can observe that these constraints are non-linear. We also note that the degree constraints in Eq. \eqref{eq:degx} are not sufficient to formulate the route for the ground vehicle. 

With just the degree constraints and the assignment constraints, the minimization problem may return a  solution with separated tours for the ground vehicle and/or isolated sub-tours for the UAV. An instance of such an infeasible solution is shown in Fig. \ref{fig:infeas}. In general, there are three different ways to eliminate these isolated tours/sub-tours. This is done by introducing a new set of constraints \textit{viz.} \textit{MTZ} constraints or flow constraints or sub-tour elimination constraints (SEC) \cite{applegatebook}. For vehicle routing problems, the cut based sub-tour elimination (SEC) constraints are known to outperform \textit{MTZ} and flow constraints  computationally\cite{tothvigo}. Therefore we use SEC constraints \eqref{eq:xsec}, \eqref{eq:wsec1} and \eqref{eq:wsec2} to eliminate such solutions containing separated tours for the ground vehicle and for the UAV tours. These constraints ensure that for any sub-set of targets $S$, such that all the targets in $S$ are assigned to a target not in $S$, then a UAV sub-tour containing only the targets in $S$ cannot exist. The constraints \eqref{eq:wsec1} and \eqref{eq:wsec2} mandate that there should be an arc coming into the set $S$ and an arc going out of the set $S$. The novelty of this model is it addresses the  connectivity of the tours without \textit{MTZ} or flow constraints, rather this model uses SEC constraints to ensure a connected tours. The exponential number of SEC constraints are not explicitly enumerated here but are addressed in the branch-and-cut framework only when they are violated.

As mentioned previously, the formulation $\mathcal F_1$ is non-linear due to the constraints in \eqref{eq:wij}. These constraints can be linearized by introducing an auxiliary variable $z_{ijk}$ for every triplet of targets $i,j,k\in T$. 

\begin{proposition}
The constraints in \eqref{eq:wij} can be reformulated using the following constraints: 
\begin{flalign}
&w_{ij} \le \sum_{k \in T} z_{ijk}, \qquad \forall [i,j] \in A, \label{eq:wijlin}\\
&z_{ijk} \le y_{ik} \qquad \forall i, j, k \in T, \label{eq:zlin1}\\
&z_{ijk} \le y_{jk} \qquad \forall i, j, k \in T, \text{ and}\label{eq:zlin2} \\
&z_{ijk} \ge y_{ik} + y_{jk} -1 \qquad \forall i, j, k \in T. \label{eq:zlin3} 
\end{flalign}
\end{proposition}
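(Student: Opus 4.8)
The plan is to show that the constraint $w_{ij} \le \sum_{k \in T} y_{ik} y_{jk}$ and the system \eqref{eq:wijlin}--\eqref{eq:zlin3} are equivalent in the sense that they define the same set of feasible $(\mathbf w, \mathbf y)$ vectors over binary variables, where the $z$ variables are treated as auxiliary. The key observation is that \eqref{eq:zlin1}--\eqref{eq:zlin3} are precisely the standard McCormick linearization of the product $z_{ijk} = y_{ik} y_{jk}$ when $y_{ik}, y_{jk} \in \{0,1\}$: the first two inequalities force $z_{ijk}$ down to $0$ whenever either factor is $0$, while the third forces $z_{ijk}$ up to $1$ whenever both factors are $1$. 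Since $y_{ik}, y_{jk}$ are binary, these three inequalities together pin $z_{ijk}$ to exactly $y_{ik} y_{jk}$ (the integrality of $z$ is not even needed — it is implied at every binary assignment of the $y$'s).

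First I would argue the forward direction: given any $(\mathbf w, \mathbf y)$ satisfying \eqref{eq:wij} with $\mathbf y$ binary, define $z_{ijk} := y_{ik} y_{jk}$ for all $i,j,k \in T$. I would then verify that this choice satisfies \eqref{eq:zlin1}, \eqref{eq:zlin2}, and \eqref{eq:zlin3} by a short case check on the binary values of $y_{ik}$ and $y_{jk}$, and that $\sum_k z_{ijk} = \sum_k y_{ik} y_{jk} \ge w_{ij}$, so \eqref{eq:wijlin} holds. Conversely, given $(\mathbf w, \mathbf y, \mathbf z)$ satisfying \eqref{eq:wijlin}--\eqref{eq:zlin3} with $\mathbf y$ binary, I would show $z_{ijk} \le y_{ik} y_{jk}$ for each triple: if either $y_{ik} = 0$ or $y_{jk} = 0$, then \eqref{eq:zlin1} or \eqref{eq:zlin2} gives $z_{ijk} \le 0$, matching the product; if both equal $1$, the product is $1$ and the bound is vacuous. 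Hence $w_{ij} \le \sum_k z_{ijk} \le \sum_k y_{ik} y_{jk}$, recovering \eqref{eq:wij}. Together the two directions establish that projecting out $\mathbf z$ from the linear system yields exactly the feasible region cut out by \eqref{eq:wij}, so the reformulation is valid.

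I do not anticipate a genuine obstacle here, since this is the textbook McCormick argument; the only thing worth being careful about is the phrasing of what "reformulated" means — one must be explicit that $\mathbf z$ is an auxiliary variable and that equivalence is asserted for the projection onto $(\mathbf w, \mathbf y)$, and that it relies on the binarity of $\mathbf y$ (constraint \eqref{eq:ywint}) so that the McCormick envelope is tight rather than merely a relaxation. I would close by remarking that the number of added variables and constraints is $O(n^3)$, which is why this linearization is practical to embed in the branch-and-cut model of Section \ref{sec:bandc}.
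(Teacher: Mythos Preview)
Your proposal is correct and takes essentially the same approach as the paper: both arguments reduce to the case analysis on the binary values of $y_{ik}$ and $y_{jk}$ that underlies the McCormick linearization of a bilinear term. Your write-up is in fact more complete than the paper's, since you explicitly argue both directions of the equivalence (existence of a feasible $\mathbf z$ in the forward direction, and $z_{ijk}\le y_{ik}y_{jk}$ in the backward direction), whereas the paper's proof only sketches that the constraints pin $z_{ijk}$ to the product value.
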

\begin{proof}
We prove the proposition by considering the following two cases (i) $y_{ik} = y_{jk} = 1$ and (ii) $y_{ik} = 0$ or $y_{jk} = 0$. For case (i), observe that $z_{ijk}$ will take a value of $1$ due to the constraints \eqref{eq:zlin1}--\eqref{eq:zlin3}. Similarly for case (ii), suppose $y_{ik} = 1$ and $y_{jk} = 0$, then $z_{ijk} = 0$. A similar argument holds when $y_{jk} = 1$ and $y_{ik} = 0$. 
\end{proof}
Now, the mixed integer linear programming formulation for the CAGVRP is given by
\begin{flalign}
&(\mathcal{F}_2) \quad\mbox{Minimize}  \sum_{e \in E} c_e x_e + \sum_{(i,j) \in A} d_{ij} w_{ij}  + \sum_{(i,j) \in A} f_{ij} y_{ij} \nonumber \\ 
&\mbox{Subject to} \,\, \text{ \eqref{eq:degx}--\eqref{eq:indegw}, \eqref{eq:wijlin}--\eqref{eq:zlin3}, and \eqref{eq:xsec}--\eqref{eq:ywint}}. \nonumber 
\end{flalign}
If the integrality restrictions in the constraints \eqref{eq:xint} and \eqref{eq:ywint} are relaxed, then we call that model a linear programing relaxation of the original MILP. In the following subsection, we shall strengthen the linear programming relaxation of the formulation $\mathcal F_2$ by introducing additional valid inequalities (a constraint is called a valid inequality if it does not remove any feasible solution).

\subsection{2-matching inequalities}
This section introduces a class of valid inequalities for the CAGVRP. These inequalities are derived from the \emph{2-matching} inequalities for the traveling salesman problem \cite{grotpadpoly, grotschelhollandmp, fischettigtsp} and is also valid for the ring-star problem and its variants \cite{laportersp,kaarthikmrsp, sundar2015exact}. Specifically, we consider the following inequality: 
\begin{flalign}
\sum_{e \in \gamma(H)}x_e + \sum_{e \in I}x_e \le &\sum_{i \in H} y_{ii} + \frac{|I|-1}{2}, \label{eq:twomatch} 
\end{flalign}
for all $H \subseteq T$ and $I \subseteq \delta(H)$. Here $H$ is called the handle and $I$, the teeth. $H$ and $I$ satisfy the following conditions: (i) no two edges in the teeth are incident on the same target and (ii) the number of edges in the teeth is odd and greater than equal to $3$. The proof of validity of the above inequality is given by the following proposition. One can refer to \cite{laportersp, kaarthikmrsp, sundar2015exact} for the proof of this proposition.

\begin{proposition} 
The 2-matching inequalities in Eq. \eqref{eq:twomatch} is valid of any feasible solution to the CAGVRP.
\end{proposition}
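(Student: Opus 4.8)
The plan is to adapt the classical validity argument for the 2-matching (blossom) inequalities of the symmetric TSP, exploiting the fact that in the CAGVRP a ground-vehicle edge can only be incident to \emph{self-assigned} targets. Fix an arbitrary feasible solution and write $H' = \{i \in H : y_{ii} = 1\}$ and $h = |H'| = \sum_{i \in H} y_{ii}$. The first step is to record two structural consequences of the degree constraints \eqref{eq:degx}: (a) if $x_e = 1$ for $e = (i,j)$ then $\sum_{e' \in \delta(i)} x_{e'} \ge 1$, so $y_{ii} = 1$ and likewise $y_{jj} = 1$; hence every active edge of $\gamma(H)$ actually lies in $\gamma(H')$ and every active tooth of $I$ meets $H$ in a vertex of $H'$; and (b) each $i \in H'$ has exactly two incident active ground edges in $E$.

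The second step is a degree count over $H'$. For $v \in H'$ let $a(v)$ be the number of active edges of $\gamma(H)$ at $v$ and $b(v)$ the number of active teeth of $I$ at $v$. Since teeth belong to $\delta(H)$ they are edge-disjoint from $\gamma(H)$, so $a(v)$ and $b(v)$ count different edges; the hypothesis that no two teeth are incident on a common target gives $b(v) \le 1$; and by (b) the two counts together cannot exceed the active degree of $v$, so $a(v) + b(v) \le 2$. Summing over $H'$, and noting that each active edge of $\gamma(H)$ is counted twice (both endpoints in $H'$) while each active tooth is counted once (its unique $H$-endpoint lies in $H'$), we obtain
\begin{equation*}
2 \sum_{e \in \gamma(H)} x_e + \sum_{e \in I} x_e \;=\; \sum_{v \in H'} \bigl(a(v) + b(v)\bigr) \;\le\; 2h ,
\end{equation*}
hence $\sum_{e \in \gamma(H)} x_e + \sum_{e \in I} x_e \le h + \tfrac12 \sum_{e \in I} x_e \le h + \tfrac{|I|}{2}$.

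The final step is the parity rounding: the left-hand side is a nonnegative integer and $h$ is an integer, so when $|I|$ is odd the bound may be tightened to $h + \tfrac{|I|-1}{2}$, which is exactly \eqref{eq:twomatch}. I do not expect a genuine obstacle here — the result is essentially inherited from the ring-star literature cited after the statement — but the two points that require care are the bookkeeping that a tooth contributes to exactly one side of the degree sum (so that the hypothesis that no two teeth share a target is used precisely to force $b(v) \le 1$), and the integrality/parity argument that upgrades $|I|/2$ to $(|I|-1)/2$. It is also worth remarking explicitly that the UAV variables $\mathbf w$, $\mathbf z$ and the communication costs $f_{ij}$ never enter the inequality, so its validity is a statement purely about the ground-vehicle component of any feasible solution, which structurally coincides with the ring-star setting.
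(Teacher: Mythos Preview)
Your argument is correct: the key ingredients --- (i) that every active ground edge is incident only to self-assigned vertices, so the edge count can be taken over $H'=\{i\in H:y_{ii}=1\}$; (ii) the degree bound $a(v)+b(v)\le 2$ coming from \eqref{eq:degx}; and (iii) the integrality/parity rounding that replaces $|I|/2$ by $(|I|-1)/2$ --- are exactly the standard ones, and your bookkeeping is clean. One small remark: the observation $b(v)\le 1$ that you attribute to the ``no two teeth share a target'' hypothesis is not actually needed for the validity chain you wrote, since $a(v)+b(v)\le 2$ already follows from the total degree being $2$; that hypothesis matters for the inequality to be non-dominated (facet-related), not for validity per se.

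As for comparison with the paper: the paper does \emph{not} supply a proof of this proposition at all --- it simply cites \cite{laportersp, kaarthikmrsp, sundar2015exact} and moves on. Your write-up is therefore strictly more informative than what appears in the paper, and it is essentially the argument one would find in those ring-star references, specialized (as you correctly note at the end) to the ground-vehicle layer of the CAGVRP.
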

The constraints in \eqref{eq:twomatch} are equivalent to the \emph{blossom's inequality} \cite{Edmonds1965} for the 2-matching problem and a special case of the comb inequalities for the symmetric traveling salesman problem. 

\section{Branch-and-cut algorithm} \label{sec:bandc}
In this section, we outline the main components of the branch-and-cut algorithm. As mentioned previously, the branch-and-cut algorithm is an intelligent enumeration technique to find an optimal solution to an MILP. Let $\bar{\tau}$ denote the optimal solution for a problem instance. 


\begin{enumerate}[1.]
\item \textit{Initialization:} Set the iteration count to $k \gets 1$ and the initial upper bound $\bar{\alpha}$ on the optimal objective value as $+\infty$. The initial linear programming subproblem is then defined by the constraints \eqref{eq:degx}--\eqref{eq:indegw}, \eqref{eq:wijlin}--\eqref{eq:zlin3}, and $0 \leq x_e \leq 1$, $0 \leq y_{ij},w_{ij} \leq 1$ for every $e \in E$ and $[i,j] \in A$ respectively. This initial linear programming subproblem is solved and inserted into a list $\mathcal L$. 
\item \textit{Termination check and subproblem selection:} If the list $\mathcal L$ is empty, then stop. Otherwise, select a subproblem from the list with the lowest objective value; this choice of subproblem is called the best-first policy \cite{wolseybook}. \label{step2}
\item \textit{Subproblem solution:} Set $k \gets k+1$. Let $\alpha$ be the solution objective value. If $\alpha \geq \bar{\alpha}$, then go to step \ref{step2}. Otherwise, if the solution is feasible for the CAGVRP, set $\bar{\alpha} \gets \alpha$, update $\bar{\tau}$ and go to step \ref{step2}; if the solution is infeasible, then go to step \ref{step5}. \label{step3}
\item \textit{Constraint separation and generation:} Introduce violated sub-tour elimination constraints \eqref{eq:xsec}, connectivity constraints \eqref{eq:wsec1} and \eqref{eq:wsec2} and 2-matching constraints \eqref{eq:twomatch}. If no constraints can be generated using the current fractional solution, then go to \ref{step6}, else go to step \ref{step3}. \label{step5}
\item \textit{Branching:} Create two subproblems by branching on a fractional variable and insert both the subproblems in the list $\mathcal L$. \label{step6}
\end{enumerate}

At the end of the branch-and-cut algorithm $\bar{\tau}$ will have the optimal solution. In the following sections, we detail the constraint separation procedure used in the step \ref{step5}. This procedure finds violated sub-tour elimination constraints, connectivity constraints and 2-matching constraints from a fractional solution to the subproblems defined in the algorithm, if any.

\subsection{Separation of connectivity constraints in \eqref{eq:xsec}--\eqref{eq:wsec2}}
In this section, we discuss exact separation procedures for separating out the constraints in \eqref{eq:xsec}--\eqref{eq:wsec2} given a fractional solution. To this end, let $(x^*, y^*, w^*)$ denote a fractional solution. We first construct a support graph ($G^*$) based on the fractional solution; $G^*=(V^*, E^*)$, $V^* = \{i \in T: 0 < y^*_{ii} < 1 \}$ and $E^* = \{e \in E: 0 < x^*_e < 1 \}$. Then, we check if the graph $G^*$ is connected; if it is not connected, each vertex set $S$ corresponding to an individual connected component, such that $t_0 \notin S$, generates a violated constraint (\ref{eq:xsec}) for each $i \in S$. If $G^*$ is connected, we find the subset of nodes $S$ with minimum value of $\sum_{e \in \delta(S)} x^*_e$. This is done by solving a problem of computing the max-flow on $G^*$, where the capacity of each edge $e$ is set to $x^*_e$. We compute the minimum cut of all pairs of nodes on $G^*$ and consider the node partition $S$ that does not contain $t_0$. We check if this set violates  constraint \eqref{eq:xsec} for each $i \in S$. If the constraint is violated for any $i$, we add the corresponding inequality to the existing pool of inequalities. 

We use a similar procedure to find violated connectivity constraints \eqref{eq:wsec1}--\eqref{eq:wsec2} for the UAV sub-tours. Based on the fractional solution $w^*$, we construct graph $G^*_u = (V^*, A^*)$, $A =\{ [i, j]: 0<w_{ij}<1 \} $. Similar to the previous procedure we find the violated constraints \eqref{eq:wsec1}--\eqref{eq:wsec2} defined by $S$, such that the subset $S$ does not contain any targets that are visited by the ground vehicle.

\subsection{$2$-matching Constraints}
To find the violated $2$-matching constraints, we follow the procedure described in \cite{laportersp, fischettigtsp}. We consider each connected component $H$ of $G^*$ as a handle of a possible violated 2-matching inequality, whose two-target teeth correspond to edges in $e\in \delta(H)$ with $x_e^* = 1$. We reject the inequality if the number of teeth is even. If the inequality is violated, then we add this inequality to the pool of violated inequalities. 

\section{Computational Results} \label{sec:compres}

In this section, we discuss the computational results of the branch-and-cut algorithm. The algorithm was implemented in Julia \cite{Julia} using JuMP \cite{Jump} (a mathematical modelling framework for the Julia programming language) and CPLEX $12.6$. The internal CPLEX cut-generation routines were disabled and CPLEX was used only to manage the enumeration (branch-and-bound) tree. All the simulations were performed on a Macbook Pro with an Intel Core i5, 2.7 GHz processor. The performance of the algorithm was tested on several randomly generated instances. 

\emph{Instance generation:} We generated twenty instances, five for each value of $|T| \in \{10, 20, 30, 40\}$. The coordinates of the targets are generated randomly from a uniform distribution in a $100 \times 100$ grid. For every pair of targets $i,j$, the travel cost for the ground vehicle to traverse the edge $(i,j)$ is chosen to be equal to the Euclidean distance between them ($l_{ij}$), and the cost of travel by the UAV is chosen to be $\alpha \cdot l_{ij}$, where $\alpha$ is a scaling factor and $\alpha \in \{0.1, 0.2, 0.3\}$. We tested the algorithm for a total of 60 instances. Tables \ref{tab:1} and \ref{tab:2} summarize the computational results of the algorithm. In Table \ref{tab:1}, the first column refers to the problem size, second column refers to the scaling factor $\alpha$, the third, fourth and the fifth columns refers to the cost of the optimal tour, cost of the ground vehicle tour and cost of the UAV tour averaged over five instances. In Table \ref{tab:2}, the average number of SEC cuts generated and the number of nodes explored in the branch-and-cut tree are listed in third and fourth columns. The last column indicates the number of instances out of five that were solved to optimality with in the allowed computation time of $9000$ seconds.

\renewcommand{\arraystretch}{0.75}
\begin{table}[htpb]
\centering
\caption{Average of optimal travel costs}
\begin{tabular}{ccccc}
\toprule
$|T|$ & $\alpha$ & Avg. opt. cost & Avg. GV cost & Avg. UAV cost \\
\midrule
 & 0.1 &	255.90 & 243.27 & 12.63 \\
10 & 0.2 & 266.74 & 246.89 & 19.85\\
 &	0.3 & 275.50 & 256.03 & 19.48 \\
 & & & & \\
 & 0.1 & 303.24 & 276.79 & 26.45 \\
20 & 0.2 & 328.68 & 280.08 & 48.60 \\
 & 0.3 & 351.19 & 290.80 & 60.40 \\
  & & & & \\
 & 0.1 & 283.53 &  246.85 & 36.68 \\
30 & 0.2 & 318.54 & 254.30 & 64.24 \\
 & 0.3 & 349.39 & 259.52 & 89.86 \\
  & & & & \\
 & 0.1 & 310.44 & 264.19 & 46.24 \\
40 & 0.2 & 357.79 & 268.86 & 88.94 \\
 & 0.3 & 397.91 & 281.71 & 116.19 \\
\bottomrule
\end{tabular}
\label{tab:1}
\end{table}

\begin{table}[htpb]
\centering
\caption{Branch-and-cut algorithm statistics}
\begin{tabular}{ccrrr}
\toprule
$|T|$ & $\alpha$ & SEC cuts & B{\&}C nodes & I \\
\midrule
& 0.1 & 167.00 & 5.80 & 5 \\
10 & 0.2 & 138.80 & 5.00 & 5 \\
& 0.3 & 170.80 & 6.20 & 5 \\
 & & & & \\
& 0.1 & 2206.60 & 46.00 & 5 \\
20 & 0.2 & 3213.20 & 54.20 & 5 \\
& 0.3 & 6263.40 &  113.20 & 5 \\
 & & & & \\
& 0.1 & 12764.80 & 247.60 & 5 \\
30 & 0.2 & 29796.40 & 576.80 & 5 \\
& 0.3 & 39399.40 & 761.40 & 5 \\
 & & & & \\
& 0.1 & 66680.60 & 1017.20 & 5 \\
40 & 0.2 & 102671.00 & 1372.80 & 2 \\
& 0.3 & 126649.80 & 1517.80 & 1 \\
\bottomrule
\end{tabular}
\label{tab:2}
\end{table}

The results tabulated in Tables \ref{tab:1} and \ref{tab:2} indicate that the proposed branch-and-cut algorithm can solve instances involving up to 40 targets with modest computation times. In summary we were able to solve 53/60 instances to optimality. For the remaining 7 instances, the algorithm produced feasible solutions that were within $3\%$ of the upper bound to the optimal (computed by CPLEX by solving corresponding dual problem). The plot in Table \ref{fig:plot} shows the average computation times for all the instances. 

\begin{table}[htpb]
\centering
\caption{Average computation time needed to find optimal solution}
\begin{tabular}{crrr}
\toprule
$|T|$ & $\alpha = 0.1$ & $\alpha = 0.2$ & $\alpha = 0.3$  \\
\midrule
10 & 0.98 & 0.89 & 0.89 \\
20 & 10.53 & 15.12 & 29.27 \\
30 & 220 & 665 & 853 \\
40 & 4588 & 7643 & 8156 \\
\bottomrule
\end{tabular}
\label{fig:plot}
\end{table}


With $\alpha$ equal to $0.1$, the maximum computation time needed to solve instances with $30$ targets is $420$ seconds, which is quite reasonable. In general, we observe that the instances with a scale factor of $\alpha = 0.3$ are more difficult to solve, and need more computation time. This is expected for the following reason: suppose the cost of travel by the UAV is zero, then the algorithm would try to assign as many targets as possible to the UAV. As the cost of travel by the UAV is increased, it has to find the right partitioning of the targets to be assigned to the ground vehicle and the UAV, that minimizes the total cost. This makes the problem combinatorially more difficult.

\section{Conclusion} \label{sec:conc}
We presented a path planning problem (CAGVRP) that arises in cooperative routing of a ground vehicle and an UAV with communication constraints. A MILP formulation is presented along with separation algorithms to find violating inequalities. The formulation and separation algorithms are integrated into a branch-and-cut framework, and implemented using CPLEX callable libraries. The algorithm was tested on several randomly generated instances with $10$, $20$, $30$ and $40$ targets. The algorithm could find optimal solutions for instances up to $40$ targets. The limitation of this algorithm is that it may not be able to solve larger instances and one might have to rely on heuristics. Future directions for this work includes finding better separation algorithms, addressing the nonlinear constraints in the branch-and-cut sub-routines instead of linearizing, and developing efficient heuristics. Also the problem can be generalized for multiple air vehicles and/or multiple ground vehicles.

\bibliographystyle{IEEEtran}
\bibliography{rcpacc.bib}

\end{document}